\title{Fast Recoloring of Sparse Graphs}
\author{Nicolas Bousquet\thanks{Department of Mathematics and Statistics. McGill University. 845 Rue Sherbrooke Ouest, Montreal, Quebec H3A 0G4, Canada. Email:~\emph{nicolas.bousquet2@mail.mcgill.ca}.} \and Guillem Perarnau\thanks{School of Computer Science. McGill University. 845 Rue Sherbrooke Ouest, Montreal, Quebec H3A 0G4, Canada. Email:~\emph{guillem.perarnaullobet@mail.mcgill.ca}.}}
\date{\today}
\theoremstyle{plain}
\newtheorem{theorem}{Theorem}
\newtheorem{lemma}[theorem]{Lemma}
\newtheorem{observation}{Observation}
\newtheorem{claim}{Claim}
\newtheorem{proposition}[theorem]{Proposition}
\newtheorem{corollary}[theorem]{Corollary}
\theoremstyle{definition}
\renewenvironment{proof}[1][Proof]{\begin{trivlist}
\item[\hskip\labelsep {\textit{#1}.}]}{\qed\end{trivlist}}
\newcommand{\C}{\mathcal{C}}
\newcommand{\eps}{\varepsilon}
\newcommand{\Poly}{\text{Poly}}
\newcommand{\diam}{\text{diam}}
\newcommand{\mad}{\text{mad}}
\definecolor{red}{RGB}{255,0,0}
\definecolor{blue}{RGB}{0,0,255}
\begin{document}

\pagenumbering{arabic}

\setcounter{section}{0}

\maketitle

\begin{abstract}
In this paper, we show that for every graph of maximum average degree bounded away from $d$, any $(d+1)$-coloring can be transformed into any other one within a polynomial number of vertex recolorings so that, at each step, the current coloring is proper. In particular, it implies that we can transform any $8$-coloring of a planar graph into any other $8$-coloring with a polynomial number of recolorings. These results give some evidence on a conjecture of Cereceda et al~\cite{Cereceda09} which asserts that any $(d+2)$ coloring of a $d$-degenerate graph can be transformed into any other  one using a polynomial number of recolorings.

We also show that any $(2d+2)$-coloring of a $d$-degenerate graph can be transformed into any other  one with a linear number of recolorings.
\end{abstract}

\section{Introduction}

Reconfiguration problems consist in finding step-by-step transformations between two feasible solutions such that all intermediate states are also feasible. Such problems model dynamic situations where a given solution is in place and has to be modified, but no property disruption can be afforded. Recently, reconfigurations problems have raised a lot of interest in the context of constraint satisfaction problems~\cite{BonsmaMNR14,Gopalan09} and of graph invariants like independent sets~\cite{ItoKO14}, dominating sets~\cite{BonamyB14a,SuzukiMN14} or vertex colorings~\cite{BonamyJ12,BonsmaC07}.

In this paper $G=(V,E)$ is a graph where $n$ denotes the order of $V$ and $k$ is an integer. For standard definitions and notations on graphs, we refer the reader to~\cite{Diestel}.
A \emph{proper $k$-coloring} of $G$ is a function $f : V(G) \rightarrow \{ 1,\ldots,k \}$ such that, for every $xy\in E$, $f(x)\neq f(y)$. Throughout the paper we will only consider proper colorings. In the following, we will omit the proper for brevity. The \emph{chromatic number} $\chi(G)$ of a graph $G$ is the smallest $k$ such that $G$ admits a $k$-coloring.
Two $k$-colorings are \emph{adjacent} if they differ on exactly one vertex. The \emph{$k$-recoloring graph of $G$}, denoted by $\C_k(G)$ and defined for any $k\geq \chi(G)$, is the graph whose vertices are $k$-colorings of $G$, with the adjacency condition defined above. Note that two colorings equivalent up to color permutation are distinct vertices in the recoloring graph.
The graph $G$ is \emph{$k$-mixing} if $\C_k(G)$ is connected. Cereceda, van den Heuvel and Johnson characterized the $3$-mixing graphs and provided an algorithm to recognize them~\cite{Cereceda09,CerecedaHJ11}. The easiest way to prove that a graph $G$ is not $k$-mixing is to exhibit a \emph{frozen} $k$-coloring of $G$, \emph{i.e.} a coloring in where every vertex is adjacent to vertices of all other colors. Such a coloring is an isolated vertex in $\C_k(G)$.

Deciding whether a graph is $k$-mixing is $\mathbf{PSPACE}$-complete for $k \geq 4$~\cite{BonsmaC07}. The \emph{$k$-recoloring diameter} of a $k$-mixing graph is the diameter of $\C_k(G)$. In other words, it is the minimum $D$ for which any $k$-coloring can be transformed into any other one through a sequence of at most $D$ adjacent $k$-colorings.
Bonsma and Cereceda~\cite{BonsmaC07} proved that there exists a family of graphs and an integer $k$ such that, for every graph $G$ in the family there exist two $k$-colorings whose distance in the $k$-recoloring graph is finite and super-polynomial in $n$.
Though, the diameter of the $k$-recoloring may be polynomial when we restrict to a well-structured class of graphs and $k$ is large enough. Graphs with bounded degeneracy are natural candidates.

The diameter of the $k$-recoloring graphs has been already studied in terms of the degeneracy of a graph.
It was shown independently by Dyer et al~\cite{dyer2006randomly} and by Cereceda et al.~\cite{Cereceda09} that for any $(d-1)$-degenerate graph $G$ and every $k \geq d+1$, $\C_{k}(G)$ is connected ($\diam (\C_{k}(G))<\infty$). Moreover, Cereceda~\cite{Cereceda} also showed that for any $(d-1)$-degenerate graph $G$ and every $k \geq 2d-1$, we have $\diam (\C_{k}(G))=O(n^2)$.
%
Cereceda et al. conjectured in 2009~\cite{Cereceda09} that,  for any $(d-1)$-degenerate graph $G$ and every $k \geq d+1$, we have $\diam (\C_{k}(G))=O(n^2)$. No general result is known so far on this conjecture, but several particular  cases have been treated in the last few years. Bonamy et al.~\cite{BonamyJ12} showed that for every $(d-1)$-degenerate chordal graph and every $k \geq d+1$, $\diam (\C_{k}(G))=O(n^2)$, improving the results of~\cite{Cereceda09,dyer2006randomly}. This result was then extended to graphs of bounded treewidth by Bonamy and Bousquet in~\cite{BonamyB13}. Unfortunately, all these results are based on the existence of an underlying tree structure. This leads to nice proofs but new ideas are required to extend these results to other classes of graphs.

\paragraph{Our results.}
In Section~\ref{sec:linear}, we show that the Cereceda's quadratic bound on the recoloring diameter can be improved into a linear bound if one more color is available. More precisely we show that for every $(d-1)$-degenerate graph $G$ and every $k \geq 2d$, the recoloring diameter of $G$ is at most $d n$.

In Section~\ref{sec:mad}, we study the $k$-recoloring diameter from another invariant of graphs related to degeneracy: the maximum average degree. The \emph{maximum average degree} of $G$, denoted by $\mad(G)$, is the maximum average degree of a (non-empty) induced subgraph $H$ of $G$. We prove that for every integer $d\geq 1$ and for every $\eps>0$, there exists $c=c(d,\eps)\geq 1$ such that for every graph $G$ satisfying $\mad(G) < d-\eps$ and for every $k \geq d+1$, $\diam (\C_{k}(G))=O(n^c)$. The proof goes as follows. We first show that the vertex set can be partitioned into a logarithmic number of sparse sets. Using this partition, we show that one color can be eliminated after a polynomial number of recolorings and then we finally conclude by an iterative argument.

Since every planar graph $G$ satisfies $\mad(G)\leq 6$, our result implies that for every $k \geq 8$ the diameter of the $k$-recoloring graph of $G$ is polynomial in $n$. Bousquet and Bonamy observed in~\cite{BonamyB14} that $k\geq 7$ is needed to obtain such a conclusion and conjectured that $k=7$ is enough (this is the planar graph version of the conjecture raised by Cereceda et al.~\cite{Cereceda09} for degenerated graphs). We also discuss the limitations of our approach by showing that it cannot provide a polynomial bound on the diameter of the $7$-recoloring graph of a planar graph. Finally, we also mention other consequences of our result to triangle-free planar graphs.

The degeneracy is closely related to the maximum average degree: a graph $G$ satisfying $\mad(G) \leq d$ is $d$-degenerate and every $d$-degenerate graph has maximum average degree at most $2d$ (see e.g. Proposition 3.1 of~\cite{NesetrilPOM}). Using the latter  inequality, one can deduce from our result that if $G$ has degeneracy $d-1$, the diameter of the $2d$-recoloring graph of $G$ is polynomial in $n$. However, as the first part of our paper shows, better results can be attained in such case.

\section{Linear diameter with \texorpdfstring{$2d$}{2d} colors}\label{sec:linear}

Let us first set some basic notations. Let $X$ be a subset of $V$. The \emph{size} $|X|$ of $X$ is its number of elements.
Let $G=(V,E)$ be a graph. For any coloring $\alpha$ of $G$, we denote by $\alpha(H)$ the set of colors used by $\alpha$ on the subgraph $H$ of $G$. The \emph{neighborhood} of a vertex $x$, denoted by $N(x)$, is the subset of vertices $y$ such that $xy \in E$. The \emph{length} of a path $P$ is its number of edges and its size, denoted by $|P|$, is its number of vertices. The \emph{distance} between two vertices $x$ and $y$, denoted $d(x,y)$, is the minimum length of a path between these two vertices. When there is no path, the distance is considered to be infinite. The \emph{distance} between two $k$-colorings of $G$ is implicitly the distance between them in the $k$-recoloring graph $\C_k(G)$.  The \emph{diameter} of $G$ is the maximum, over all the pairs $u,v\in V(G)$, of the distance between $u$ and $v$.


\begin{theorem}\label{thm:lineardiam}
 For every $(d-1)$-degenerate graph $G$ on $n$ vertices and every $k \geq 2d$, $\diam (\C_{k}(G)) \leq d n$. Even stronger, there exists a recoloring procedure where every vertex is recolored at most $d$ times.
\end{theorem}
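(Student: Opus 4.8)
The plan is to exploit the $(d-1)$-degeneracy directly by processing vertices one at a time according to a degeneracy ordering, and to bound the number of recolorings per vertex by $d$. Fix a linear ordering $v_1, v_2, \dots, v_n$ of $V$ such that each $v_i$ has at most $d-1$ neighbors among $\{v_1,\dots,v_{i-1}\}$; equivalently, reading the order backwards, each vertex has at most $d-1$ neighbors that come \emph{later}. Call these later neighbors the \emph{forward neighbors}; every vertex has at most $d-1$ of them. We want to transform a source coloring $\alpha$ into a target coloring $\beta$.

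The core idea is an induction on $n$ that ``freezes'' the last vertex. First I would recolor $v_n$: it has at most $d-1$ neighbors, so among the $k \geq 2d$ colors there is always a color not used by $\alpha$ on $N(v_n)$ and distinct from $\beta(v_n)$ is not needed yet --- what we actually want is to move $v_n$ to its target color $\beta(v_n)$ eventually, but we cannot necessarily do that immediately because $N(v_n)$ may currently use $\beta(v_n)$. The trick is the reservoir of colors: since $k \geq 2d$ and $v_n$ has at most $d-1$ neighbors, there are at least $2d - (d-1) = d+1 \geq d$ colors available for $v_n$ at any moment. The plan is to recurse on $G - v_n$, but with a modified target: we first recolor the neighbors of $v_n$ (inside the recursion on $G-v_n$) so that none of them uses $\beta(v_n)$, then move $v_n$ to $\beta(v_n)$ and keep it frozen there, and finally finish transforming $G - v_n$ to $\beta$. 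The subtlety is that freezing $v_n$ at color $\beta(v_n)$ forbids that one color at each of its (at most $d-1$) neighbors, but this is exactly the kind of constraint a degeneracy-based induction absorbs.

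Concretely, I would prove by induction the stronger statement: for any target coloring $\beta$ and any ``forbidden pattern'' that forbids at most one color at each vertex subject to the forbidden color differing from $\beta$'s color there, one can reach $\beta$ recoloring each $v_i$ at most once \emph{per forbidden constraint it must respect during the process}. Unrolling this, $v_i$ gets recolored: once to clear it out of the way of its forward neighbors being set, and the accounting must show the total per vertex is $\leq d$. The cleanest bookkeeping: when we recurse into $G - v_n$, the subproblem there asks us first to reach an intermediate coloring (where $N(v_n)$ avoids $\beta(v_n)$) and then to reach $\beta$ --- two phases, but crucially in the first phase each vertex of $G-v_n$ is recolored $\leq d-1$ times by induction (it's $(d-2)$-degenerate after deleting... no), so I must instead structure the recursion so each vertex is touched at most once ``on behalf of'' each of its $\leq d-1$ forward neighbors plus at most once for its own final placement: total $\leq (d-1) + 1 = d$. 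This is the heart of the argument.

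The main obstacle is making the recursion's interface clean enough that the per-vertex recoloring count telescopes to exactly $d$ rather than blowing up. The danger is that recursing on $G - v_n$ ``twice'' (before and after freezing $v_n$) doubles the budget; the fix is to observe that the second phase is itself just ``reach $\beta$ on $G - v_n$ with $v_n$ frozen,'' i.e. a single instance of the same problem on a graph where one extra color is blocked at $\leq d-1$ vertices, and to set up the invariant so that the first phase (clearing $\beta(v_n)$ off $N(v_n)$) costs at most one recoloring at each of those $\leq d-1$ neighbors and zero elsewhere --- a purely local operation justified again by the $k \geq 2d \geq d + (d-1) + 1$ slack. Once the invariant is stated correctly, each step is a short counting argument, and the bound $\diam(\C_k(G)) \leq dn$ follows since every vertex is recolored at most $d$ times.
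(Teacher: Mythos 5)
Your high-level frame (induction on a degeneracy ordering, delete the minimum-degree vertex $u=v_n$, budget of $d$ recolorings per vertex) matches the paper, but the mechanism you propose for handling $v_n$ has a genuine gap, and it is the crux of the whole argument. You want to \emph{proactively} clear the color $\beta(v_n)$ off $N(v_n)$, freeze $v_n$ at $\beta(v_n)$, and then recolor $G-v_n$ under that frozen constraint; you justify the clearing phase as ``a purely local operation'' costing one recoloring at each conflicting neighbor, via the slack $k\ge d+(d-1)+1$. That slack computation applies to $v_n$ itself (which has degree at most $d-1$), but \emph{not} to its neighbors: a degeneracy ordering bounds only the backward degree, so a neighbor $w$ of $v_n$ may have arbitrarily large degree in $G$ and may have no legal color to move to. Concretely, take $d=2$, $k=4$, and a star with center $w$ colored $1$, leaves colored $2,3,4$ (plus the leaf $v_n$ with $\beta(v_n)=1$): $w$ is frozen, and recoloring it requires first recursively recoloring its leaves --- exactly the second recursive call whose cost you were trying to avoid. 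The ``invariant stated correctly'' that would make the per-vertex count telescope to $d$ is never actually produced, and this example shows the clearing phase cannot be made local, so the accounting does not close.

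The paper's proof inverts your logic precisely to dodge this issue: it never touches the neighbors of $u$ on $u$'s behalf. It first builds the full recoloring sequence on $G-u$ by induction (each vertex recolored at most $d$ times, hence the at most $d-1$ neighbors of $u$ are recolored at most $d(d-1)$ times in total), and then inserts recolorings of $u$ \emph{reactively}: just before a neighbor would take $u$'s current color, $u$ moves. The key trick is that $u$ always has at least $k-(d-1)\ge d+1$ legal colors, so it can pick one avoiding the next $d$ colors about to be assigned to its neighbors; consecutive forced moves of $u$ are therefore at least $d$ conflict-events apart, giving at most $d-1$ moves during the sequence plus one final move to $\beta(u)$. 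That reactive, look-ahead argument is the missing idea; without it (or a correct substitute for your clearing phase), the proposal does not yield the bound.
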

\begin{proof}
Let $\alpha$ and $\beta$ be two $k$-colorings. We will show by induction on the number of vertices that there exists a recoloring procedure that transforms $\alpha$ into $\beta$ and where every vertex is recolored at most $d$ times. If $n=1$ the result is obviously true. Let $G$ be a $(d-1)$-degenerate graph on $(n+1)$ vertices and let $u$ be a vertex of degree at most $d-1$. Consider $G'$ to be the graph induced by $V \setminus u$. Let us denote by $\alpha'$ and $\beta'$ the restrictions of $\alpha$ and $\beta$ to $G'$. By induction, the coloring $\alpha'$ can be transformed into $\beta'$ so that every vertex is recolored at most $d$ times and at every step, the $k$-coloring is proper in $G'$.

Since $u$ has at most $d-1$ neighbors and since each vertex in $G'$ is recolored at most $d$ times, the neighbors of $u$ are recolored $\ell\leq d(d-1)$ times in this sequence. Let $t_1, \dots,t_{\ell}$ be the times in the recoloring sequence when a neighbor of $u$ changes its color. For any time $t$ in the sequence, let $c_t$ be the new color assigned at this time.


Consider again the initial graph $G$. Let us now try to add some recolorings of the vertex $u$ in the sequence of recolorings obtained for $G'$ to guarantee that the $k$-colorings are proper in $G$. We claim that the vertex $u$ can be inserted in thee recoloring sequence of $G'$ with the addition of at most $d$ new recoloring steps that change the color of $u$. Consider the following recoloring algorithm: at each step of the recoloring process, some vertex $v$ is recolored from color $a$ to color $b$. If $v$ is not a neighbor of $u$ or if the current color of $u$ is not $b$, the obtained coloring is still proper in $G$ and we do not perform any recoloring of $u$.
Assume now that $v\in N(u)$ and that the color of $u$ is $b$. This happens at some time $t_i$, with $i\leq \ell$. In this case, we add a new recoloring step in our sequence right before the recoloring of $v$ at time $t_i$,  in which we change the color of $u$. In order to maintain the proper coloring, we want to assign to $u$ a color distinct from the colors in $N(u)$ (there are at most $d-1$ different colors there). So there remain at least $k-(d-1) \geq d+1$ choices of colors for $u$ that do not create monochromatic edges. Thus, we assign to $u$ a color distinct from $c_{t_i},\ldots,c_{t_{i+d-1}}$. By choosing this color, we make sure that $u$ will require no recoloring before time $t_{i+d}$ in the sequence.

Let $s$ be the number of recolorings of $u$ and let $t_{i_1},\dots, t_{i_s}$ the corresponding recoloring times in the original sequence. By the construction of the new sequence, observe that $i_{j+1}-i_j\geq d$ for every $j< s$. Since $\ell\leq d(d-1)$ and $i_s\leq \ell$, we have that $s\leq d-1$. Observe that at the end of the procedure we may have to change the color of $u$ to $\beta(u)$ if it is not its current color. Hence, the recoloring of $V \setminus u$ can be extended to $V$  and we recolor each vertex at most $d$ times, which concludes the proof.
\end{proof}

\section{Recoloring sparse graphs}\label{sec:mad}

The \emph{maximum average degree} of a graph $G$ is defined as
$$
\mad(G)=\max_{\emptyset \neq H\subseteq G} \frac{2|E(H)|}{|V(H)|}\;.
$$

We will prove the following theorem that relates the maximum average of the graph with the diameter of its recoloring graphs.
\begin{theorem}\label{thm:mad_recol}
For every integer $d\geq 0$ and for every $\eps>0$, there exists $c=c(d,\eps)\geq 1$ such that for every graph $G$ on $n$ vertices satisfying $\mad(G)\leq d-\eps$ and for every $k\geq d+1$, we have $\diam(\C_{k}(G))=O(n^c)$.
\end{theorem}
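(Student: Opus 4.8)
The plan is to follow the three-step outline sketched in the introduction. The first step is a structural decomposition: I would show that any graph $G$ with $\mad(G) \le d - \eps$ admits a partition $V = V_1 \cup \dots \cup V_t$ of its vertex set with $t = O(\log n)$ such that each $V_i$ induces a subgraph that is ``very sparse'' — concretely, one should aim for each $G[V_i]$ to have small maximum average degree, or better still bounded degeneracy with a degeneracy gap, so that recoloring inside a part is cheap. The natural way to obtain this is greedily: the condition $\mad(G) \le d-\eps$ forces a linear fraction of vertices to have degree at most $d-1$ (indeed a positive density of low-degree vertices), so repeatedly peeling off an independent-ish or low-degree set and recursing halves the graph in a constant number of rounds, giving $O(\log n)$ layers. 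The exponent $c = c(d,\eps)$ will ultimately come from how the $1/\eps$-dependence of this density bound interacts with the recursion depth.

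The second step — which I expect to be the main obstacle — is an ``eliminate one color'' lemma: given $k \ge d+1$ colors and the layered structure above, show that from any $k$-coloring $\alpha$ one can reach, in $\mathrm{poly}(n)$ recolorings, a $k$-coloring that avoids some fixed color, say color $k$, on a prescribed sparse set; or more usefully, reduce to recoloring a graph with one fewer vertex / one fewer ``active'' color while charging only polynomially many moves. The subtlety is that with only $d+1$ colors there is essentially no slack (this is exactly why the Cereceda conjecture is hard), so one cannot simply freeze colors the way Theorem~\ref{thm:lineardiam} does with $2d$ colors. I would try to exploit the layers: process $V_t$ first, where the induced graph is sparse enough that $d+1$ colors give real room, push the ``defect'' up into $V_{t-1}$, and iterate. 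Each layer transition should cost a polynomial number of recolorings, and with $O(\log n)$ layers the total is still polynomial — though the exponent grows with the number of layers unless the per-layer cost is only polylogarithmic in the relevant parameter, which is the delicate quantitative point.

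The third step is the iterative wrap-up: once we can eliminate a color at polynomial cost, we recurse on $k \to k-1$ (or on a reduced graph), transforming $\alpha$ toward $\beta$ by a standard ``make the colorings agree on more and more of the graph'' induction, and sum the geometric-in-$k$ but polynomial-in-$n$ bounds to get $\diam(\C_k(G)) = O(n^c)$ with $c$ depending only on $d$ and $\eps$. I would also check the base case $d = 0$ (empty graph) and $k = d+1$ separately, and verify that the decomposition and the color-elimination step are genuinely algorithmic so that the bound is on the actual diameter and not merely on a non-constructive distance. The one place I would be most careful about is making the exponent $c$ uniform: the recursion in $k$ multiplies the layer count, so I would want the single-layer, single-color-elimination cost to be something like $n^{O(1)}$ with the $O(1)$ independent of $d$, pushing all the $d$- and $\eps$-dependence into the additive/constant factors rather than the exponent where possible, and otherwise tracking it carefully through $c(d,\eps)$.
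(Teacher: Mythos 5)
Your outline reproduces the high-level plan from the introduction (logarithmic layered decomposition, color elimination, iteration), and the decomposition step is essentially right: peeling off the vertices of degree at most $d-1$ removes an $\eps/d$ fraction of the graph at each round, giving a $t$-partition of degree $d-1$ with $t=O(\log n)$. But the two ideas that actually make the argument work are missing. First, the color-elimination step, which you correctly flag as the main obstacle, is not just delicate --- it requires a concrete mechanism. The paper's Lemma~\ref{lem:notmanyrecol} recolors a single vertex $v$ at level $L(v)$ by a recursive procedure that only ever touches vertices at strictly lower levels, and bounds the total work by $\ell^{L(v)}$ recolorings per vertex via a counting argument on level-decreasing paths (each recursive call is indexed by a distinct such path starting at $v$, and there are at most $\ell^{i-1}$ of them reaching any fixed vertex $i$ levels down). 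Saying ``each layer transition should cost a polynomial number of recolorings'' without such a mechanism leaves the core of the proof unproven; note also that the bound $\ell^{L(v)}$ is polynomial in $n$ only because $t=O(\log n)$, so the decomposition and the recoloring cost are tightly coupled and cannot be analyzed independently.

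Second, your wrap-up ``recurse on $k\to k-1$'' does not make progress on its own: after freeing one color you have $k-1\geq d$ colors on the same graph with the same degree-$(d-1)$ partition, and the elimination step needs two more colors than the partition degree, so it cannot be applied again. The paper's fix (Lemma~\ref{lem:recol}) is to remove a maximal stable set $S$ chosen so that $G\setminus S$ admits a $t$-partition of degree $d-2$, assign the freed color permanently to $S$, and recurse on $G\setminus S$; the color count and the partition degree then decrease in lockstep, and after $d-1$ rounds the remaining graph is empty and every starting coloring has been driven to a single canonical coloring $\gamma^*$. Without this degree-reduction step the iteration stalls after one round, so this is a genuine gap rather than a detail to be filled in.
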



For every graph $G$ and every $t$-partition $\{V_1, \dots, V_t\}$ of the vertex set of $G$, we consider the following induced subgraphs for every $i\leq t$,
$$
G_i=G\left[\cup_{j\geq i} V_j\right]\;.
$$
A \emph{$t$-partition of degree $\ell$ of $G$} is a partition $\{V_1, \dots, V_t\}$ of the vertex set of $G$ such that every vertex $v\in V_i$ has degree at most $\ell$ in $V(G_i)$. The \emph{level function} of a partition, denoted by $L: V(G) \longrightarrow \{ 1,\ldots,t \}$, labels each vertex with its corresponding part of the partition, that is $L(u)=i$ for every $u \in V_i$.

The existence of a $t$-partition of degree $(d-1)$ is crucial in the proof of our theorem. Let us briefly explain why. Fix $k\geq d+1$. For any $k$-coloring $\alpha$ of $G$ and for every vertex $v\in V$, there exists at least one color $a \neq \alpha(v)$ that does not appear in $N_{G_{L(v)}}(v)$. Indeed the vertex $v$ has at most $(d-1)$ neighbors in $G_{L(v)}$ and there are $k\geq d+1$ colors. Thus, we can always change the color $\alpha(v)$ by $a$ without creating any monochromatic edge in $G_i$.
Nevertheless, notice that this recoloring may create monochromatic edges in $G$. The following lemma will take care of them by showing that a polynomial number of recolorings is enough to ensure that the recoloring of $v$ with color $a$ does not create any monochromatic edge in $G$.

We say that two colorings $\alpha$ and $\beta$ \emph{agree} on some subset $X$ if $\alpha(x)= \beta(x)$ for every $x \in X$.

\begin{lemma}\label{lem:notmanyrecol}
Suppose that $G$ admits a $t$-partition of degree $\ell$.
For every $v\in V$ and every $(\ell+2)$-coloring $\alpha$, we can change the color of $v$ by recoloring each vertex at most $\ell^{L(v)}$ times. Moreover, the current $(\ell+2)$-coloring agrees with $\alpha$ in $V(G_{L(v)})\setminus \{v\}$ at any recoloring step.
\end{lemma}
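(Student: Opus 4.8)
The plan is to give an explicit recursive recoloring procedure built on the $t$-partition of degree $\ell$; write $L$ for its level function. Since $v$ has at most $\ell$ neighbors in $G_{L(v)}$ and there are $\ell+2$ colors, one may fix a color $b\neq\alpha(v)$ not appearing on $N_{G_{L(v)}}(v)$ in $\alpha$, to serve as the target for $v$. More generally, I would define a procedure $\textsc{Recolor}(x,c)$ with precondition that the current coloring is proper, $c$ differs from the current color of $x$, and $c$ does not appear on $N_{G_{L(x)}}(x)$ in the current coloring. It does the following: while some neighbor $w$ of $x$ with $L(w)<L(x)$ currently has color $c$, pick such a $w$ of largest possible level, pick a color $c_w\neq c$ avoiding the at most $\ell$ current colors on $N_{G_{L(w)}}(w)$ (possible: at most $\ell+1$ forbidden colors among $\ell+2$), and call $\textsc{Recolor}(w,c_w)$; once no such neighbor remains, recolor $x$ to $c$. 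The whole procedure is the single top-level call $\textsc{Recolor}(v,b)$; here $\ell\ge 1$, which is the only case in which the statement is used.

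Correctness would rest on the structural fact, proved by induction on $L(x)$ from the bottom up, that a call $\textsc{Recolor}(x,c)$ recolors only $x$ and vertices of level strictly below $L(x)$. From this one obtains at once: the top-level call keeps the coloring in agreement with $\alpha$ on all of $V(G_{L(v)})\setminus\{v\}$; every intermediate coloring stays proper, because when $x$ is finally recolored to $c$ its neighbors inside $G_{L(x)}$ have not been touched and so still avoid $c$, while its lower-level neighbors that carried $c$ have all been treated and now avoid $c$; and the loop inside $\textsc{Recolor}(x,c)$ terminates, since treating a bad neighbor can only create bad neighbors of strictly smaller level, so the largest level of a bad neighbor of $x$ is non-increasing.

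The recoloring count is the crux. I would prove, by induction on $L(v)-L(u)$, that during the run of $\textsc{Recolor}(v,b)$ each vertex $u$ is recolored at most $\ell^{\,L(v)-L(u)}\le\ell^{L(v)}$ times. The base case $L(u)=L(v)$ is immediate, since then $u=v$, recolored once. For $j:=L(u)<L(v)$: first, $u$ is recolored exactly once per call $\textsc{Recolor}(u,\cdot)$ (recursive sub-calls touch only strictly lower levels), and every such call is initiated inside the while-loop of a call $\textsc{Recolor}(x,\cdot)$ for some neighbor $x$ of $u$ with $L(x)>j$. Second, $u$ has at most $\ell$ neighbors of level greater than $j$, since each such neighbor lies with $u$ in $G_j$, where $u$ has degree at most $\ell$. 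Third, a single execution of the while-loop of a call $\textsc{Recolor}(x,\cdot)$ starts at most one call $\textsc{Recolor}(u,\cdot)$: the loop treats the bad neighbors of $x$ in decreasing order of level, so by the time $u$ is treated there are no bad neighbors of $x$ of level $>j$, and each later step — a call on a vertex of level $\le j$ other than $u$, which recolors only strictly lower levels — neither recreates a bad neighbor of level $>j$ nor recolors $u$. Combining the three points, the number of calls $\textsc{Recolor}(u,\cdot)$ is at most the sum, over the at most $\ell$ neighbors $x$ of $u$ with $L(x)>j$, of the number of calls $\textsc{Recolor}(x,\cdot)$; by the induction hypothesis each such number is at most $\ell^{\,L(v)-L(x)}\le\ell^{\,L(v)-j-1}$, so $u$ is recolored at most $\ell\cdot\ell^{\,L(v)-j-1}=\ell^{\,L(v)-j}$ times.

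The main obstacle — and the reason the bound is exponential in $L(v)$ rather than polynomial — is precisely this interference between recursive sub-calls: distinct lower-level neighbors of a vertex may share still-lower-level neighbors, so a single vertex can be forced to change color once for each of its (up to $\ell$) higher-level neighbors and once per recoloring of such a neighbor. The two ingredients that keep this bounded and give the clean estimate are the degree condition of the $t$-partition (at most $\ell$ neighbors at higher levels) and the top-down processing order (at most one re-trigger of a vertex per loop execution); with these, the level recursion closes to $\ell^{L(v)}$.
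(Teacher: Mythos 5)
Your proof is correct and follows essentially the same route as the paper: the same recursive procedure that, in order to give a vertex its target color, first recursively clears that color from its lower-level neighbors in decreasing order of level, together with the same key invariant that a call on $x$ recolors only $x$ and vertices of strictly smaller level (which yields both properness and the agreement on $V(G_{L(v)})\setminus\{v\}$). The only real difference is the bookkeeping for the $\ell^{L(v)}$ bound: the paper identifies each call with a lexicographically decreasing, level-decreasing path from $v$ and counts such paths, whereas you induct directly on the number of calls via $\mathrm{calls}(u)\le\sum_{x}\mathrm{calls}(x)$ over the at most $\ell$ higher-level neighbors $x$ of $u$ (justified by your ``at most one trigger per loop execution'' observation) --- this is exactly the recurrence underlying the paper's path-counting claim, so the two arguments are interchangeable.
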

\begin{proof}
Consider a total order $\prec$ on the set of vertices such that if $u \prec w$ then $L(u) \leq L(w)$.
The proof is based on a recursive recoloring algorithm. Let us first give a few definitions. A procedure $C$ \emph{calls} a procedure $D$ if $D$ is started during the procedure $C$. In this case we also say that $D$ is a \emph{recursive call} of $C$. A procedure $D$ is \emph{generated} by $C$ if there exists a sequence of procedures $C=C_1, C_2,\dots, C_t=D$ such that $C_i$ calls $C_{i+1}$ for every $i<t$.

We consider Algorithm~\ref{alg:recoloring} which has as an input a tuple $(\gamma,P)$, where $\gamma$ is a coloring of $G$ and $P$ is a list of vertices that forms a path in $G$. We will call Algorithm~\ref{alg:recoloring} with input $(\alpha,\{v\})$.
\begin{algorithm}
\caption{Recoloring Algorithm}
\label{alg:recoloring}
\begin{algorithmic}[PERF]
\STATE {\bf Input:} A coloring $\gamma$ of $G$, a list $P$ of vertices.
\STATE {\bf Output:} A coloring $\gamma'$ of $G$ which agrees with $\gamma$ on $V(G_{L(u)}) \setminus u$ where $u$ is the last element of $P$. Moreover $\gamma(u) \neq \gamma'(u)$. \vspace{10pt}
\STATE Let $u$ be the last element of $P$. \hfill {\scriptsize $u$ is the current vertex of the procedure.}
\STATE Let $a$ be a color not in $\gamma\left(\{u\}\cup N_{G_{L(u)}}(u)\right)$.\hfill {\scriptsize Such a color exists and is the \emph{target color} for $u$.}
\STATE Let $\gamma' = \gamma$. \hfill{\scriptsize $\gamma'$ is the current coloring.}
\STATE Let $X=\{ v_1\succ\ldots\succ v_s \}$ be the set of neighbors of $u$ in $\cup_{j < L(u)} V_j$.
\FOR{$v_i \in X$ with $i$ increasing}
   \IF{$v_i$ is colored with $a$}
  \STATE Add $v_i$ at the end of $P$.
  \STATE $\gamma' \leftarrow$ Algorithm~\ref{alg:recoloring} with input $(\gamma',P)$.  \hfill {\scriptsize The color of $v_i$ is now different from $a$.}
  \STATE Delete $v_i$ from the end of $P$.
  \ENDIF
\ENDFOR
\STATE Change the color of $u$ to $a$ in $\gamma'$.
\STATE {\bf Output} $\gamma'$
\end{algorithmic}
\end{algorithm}

The vertex $u$ in the last position of the list $P$ in the procedure, will be called the \emph{current} vertex of the procedure.

Let us first state a few immediate remarks concerning this algorithm. In each recursive call, we add one vertex in the list $P$. By construction, the vertex added in $P$ in the recursive call is a neighbor of the current vertex $u$ and has level strictly smaller than $u$. Since in any procedure $C$ the unique recolored vertex is the current vertex $u$, an immediate induction argument ensures that any recolored vertex in procedures generated by $C$ has level strictly smaller than $L(u)$. So we have the following:

\begin{observation}\label{obs:algo1}
If the procedure $C$ with input $(\gamma,P)$ makes some recursive calls, then the size of $P$ increases in these calls. Moreover, the level of the vertex $v_i$ added at the end of $P$ during $C$, is strictly smaller than the level of the current vertex $u$ and both vertices are adjacent. \\
This implies that for every vertex $w$ recolored in a procedure generated by $C$ we have $L(w)<L(u)$, \emph{i.e.} the coloring output by a procedure with current vertex $u$ agrees with $\gamma$ on $V(G_{L(u)}) \setminus u$.
\end{observation}

Let us now prove that Algorithm~\ref{alg:recoloring} ends, that it makes the right amount of recolorings and that it is correct.

\paragraph{Termination and number of recolorings in Algorithm~\ref{alg:recoloring}.}
Each call of Algorithm~\ref{alg:recoloring} creates at most $n$ recursive calls (we have a priori no good upper bound on the number of neighbors of $u$ in $\cup_{j<L(u)} V_j$). Since the level of the current vertex $u$ decreases at every recursive call, the depth of the recursion is at most $L(u)\leq t$. This implies that Algorithm~\ref{alg:recoloring} will terminate in at most $n^{t}$ iterations.
We need an additional argument to show that the number of recolorings is at most $\ell^t$ as stated in Lemma~\ref{lem:notmanyrecol}.

Notice that the number of recolorings is exactly the number of procedures since every procedure $C$ only recolors one vertex once, the current one in $C$.
Recursive calls made in a procedure where $u$ is the current vertex are called recursive calls of $u$. If we can bound the number of procedures where $v$ is the current vertex, then we can bound the number of recolorings of $v$. We say that a procedure $C$ is \emph{generated by $u$} if a procedure with current vertex $u$ generates $C$.
We will show that the sequence of paths used as an input of successive calls of Algorithm~\ref{alg:recoloring}, is lexicographically strictly decreasing (in particular two procedures cannot have the same path $P$). Then we will prove that the number of paths passing through any vertex is bounded. These two facts suffice to provide a meaningful upper bound on the number of recolorings.

Recall that the vertices of $G$ are equipped with a total order $\prec$. A path $P_1$ is lexicographically smaller than $P_2$, denoted by $ P_1 \prec_l P_2$ if:
\begin{itemize}
 \item $P_2$ is empty and $P_1$ is not. 
 \item The first vertex of $P_1$ is smaller than the first vertex of $P_2$.
 \item The first vertices of both paths are the same  and the path $P_1$ without its first vertex is lexicographically smaller than the path $P_2$ without its first vertex.
\end{itemize}
Informally, we compare the first vertex of each path (which in our case will be the largest) and if they are not equal, the largest path is the one with the largest vertex; otherwise we compare the remaining paths. Notice that if $P_2$ is contained in the first positions of a path $P_1$, then $P_1 \prec_l P_2$. In particular, with this definition, the empty path is the largest one.

The \emph{path of the procedure $C$}, denoted by $P_C$, is the path $P$ given as an input of the procedure $C$.
\begin{claim}\label{cla:orderpaths}
If procedure $D$ is initiated after procedure $C$, then $P_D \prec_l P_C$.
\end{claim}
\begin{proof}
First note that if $D$ is called by $C$ then $P_D \prec_l P_C$. Indeed, the path $P_C$ is contained in the first positions of $P_D$. Consider now two procedures $C$ and $D$ such that $D$ is generated by $C$. An immediate induction argument using the previous observation ensures that $P_D \prec_l P_C$.

So we may assume that $D$ is not generated by $C$. Let us denote by $I$ the initial procedure. Recall that all the procedures are generated by $I$ and that the procedures are organized in a tree structure. So there exist a unique sequence $\mathcal{S}_1:I=C_1, C_2,\dots, C_{t_1}=C$ such that $C_{j}$ calls $C_{j+1}$ for every $j < t_1$ and a unique sequence $\mathcal{S}_2:I=D_1, D_2,\dots, D_{t_2}=D$ such that $D_j$ calls $D_{j+1}$ for every $j < t_2$. Let us denote by $B$ the last common procedure in $\mathcal{S}_1$ and $\mathcal{S}_2$. Since $D$ is not generated by $C$, $\mathcal{S}_1$ is not included in $\mathcal{S}_2$ and then $B$ is not the last element of $\mathcal{S}_1$ or $\mathcal{S}_2$. Let us denote by $B_C$ the procedure called by $B$ in $\mathcal{S}_1$ and by $B_D$ the procedure called by $B$ in $\mathcal{S}_2$. We have:
\begin{itemize}
\item $B_C$ and $B_D$ are called by $B$ in this order (otherwise $D$ would have been initiated before $C$),
\item either $B_C=C$ or $B_C$ generates $C$, and
\item either $B_D=D$ or $B_D$ generates $D$.
\end{itemize}
The previous observations ensure that $P_{D} \preceq_l P_{B_D}$. Thus, it suffices to show that $P_{B_D} \prec_l P_{C}$. Since $B_C$ and $B_D$ are procedures called by $B$, the corresponding paths $P_{B_C}$ and $P_{B_D}$ are both $P_B$ plus a last additional vertex, denoted respectively by $v_{B_C}$ and $v_{B_D}$.  Since $B_C$ is called before $B_D$, by construction of Algorithm~\ref{alg:recoloring} we have $v_{B_D} \prec v_{B_C}$. Notice that $C$ is generated by $B_C$, which implies that $P_{B_C}$ is contained in the first $|P_{B_C}|$ positions of $P_C$. So the path $P_C$ is lexicographically larger than $P_{B_D}$: they coincide in the first $|P_B|$ positions and at the first position where they differ we have $v_{B_D}\prec v_{B_C}$.
\end{proof}

A path $P=(u_1,\dots, u_{s})$ is \emph{level-decreasing} if $L(u_i)>L(u_{i+1})$ for every $i< s$.
Observation~\ref{obs:algo1} ensures that $P_C$ is a level-decreasing path for any procedure $C$.

\begin{claim}\label{cla:numberpaths}
 The number of level-decreasing paths between two vertices $u$ and $w$ in different levels is at most $\ell^{i-1}$ where $i= |L(u) - L(w)|$.
\end{claim}
\begin{proof}
 Without loss of generality, we may assume $L(w)<L(u)$. Let us prove the claim by induction on $i$. If $i=1$, then there is at most one level-decreasing path between $u$ and $w$ which is the edge $uw$ if it exists. Assume now that $L(u)-L(w)=i$. By the definition of a $t$-partition of degree $\ell$, the vertex $w$ has at most $\ell$ neighbors in $G_{L(w)}$, and, in particular, $s \leq \ell$ neighbors in $\cup_{j = L(w)+1}^{L(u)-1} V_j$. Let us denote by $w_1,\ldots,w_{s}$ these neighbors of $w$. Notice that $1\leq L(u)-L(w_j) \leq i-1$ for every $w_j$. Since $P$ is a level-decreasing path from $u$ to $w$, the before last element of $P$ should be in $\{w_1,\ldots,w_s\}$. By induction, for every $w_j$, there are at most $\ell^{i-2}$ level-decreasing paths from $u$ to $w_j$. Therefore, the number of level-decreasing paths from $u$ to $w$ is at most
\[ \sum_{j=1}^{s} \ell^{i-2} \leq \ell^{i-1} \;,\]
which concludes the proof of Claim~\ref{cla:numberpaths}.
\end{proof}
Let $I$ be the initial procedure.  Recall that $P_I=\{ v \}$. Since each procedure $C$ is generated by $I$, the first vertex in the path $P_C$ is $v$. By Claim~\ref{cla:orderpaths}, the sequence of paths used as an input of successive calls of Algorithm~\ref{alg:recoloring} is lexicographically strictly decreasing. By Claim~\ref{cla:numberpaths}, the number of level-decreasing paths from the vertex $v$ to any given $w$ is at most $\ell^{i-1}$, where $i=L(v) - L(w)\leq L(v)$. Since the unique recolored vertex in each procedure is the current vertex $u$, we obtain that for every $v\in V$ and every $(\ell+2)$-coloring $\alpha$ we can change the color of $v$ by recoloring each vertex at most $\ell^{L(v)}$ times.

\paragraph{Correctness of Algorithm~\ref{alg:recoloring}.}
Let us now show that if the initial coloring is proper, then at any step the current coloring is also proper. We have already seen that in each procedure $C$ the unique recolored vertex is the current vertex $u$ (the last vertex in $P$) and in any recursive call of $u$, the current vertex $w$ satisfies $L(w)<L(u)$.


Now, let us see that when $u$ is recolored in procedure $C$ with color $a$, no neighbor of $u$ has color $a$. Color $a$ is chosen in Algorithm~\ref{alg:recoloring} such that no neighbor of $u$ in $V(G_{L(u)})$ is colored with $a$. Since, by Observation~\ref{obs:algo1}, the vertices of $V(G_{L(u)})$ are not recolored by any procedure generated by $C$, recoloring $u$ with $a$ does not create monochromatic edges in $V(G_{L(u)})$.

Let $v_1,\dots,v_s$ be the neighbors of $u$ in $\cup_{j < L(u)} V_j$ in decreasing order with respect to $\prec$.
Let $\gamma'_0=\gamma$ be the coloring used as an input of the procedure $C$ and, for every $i\leq s$, let $\gamma'_i$ be the coloring $\gamma'$ output by the procedure called by $C$ whose current vertex is $v_i$. Recall that when the recoloring of $u$ is performed, the current coloring is $\gamma'_s$. We will show that $\gamma'_s(v_i)\neq a$ for every $i\leq s$.

If $\gamma'_{i-1}(v_i)\neq a$, then we do not create any new procedure to change the color of $v_i$ and $\gamma'_i=\gamma'_{i-1}$. If $\gamma'_{i-1}(v_i)=a$, then $\gamma'_i$ is the output of Algorithm~\ref{alg:recoloring} with input parameters $\gamma=\gamma'_{i-1}$ and $P=(P_C,v_i)$. Since $v_i$ is now the last vertex of $P$, by construction of the algorithm, the coloring $\gamma'_{i}$ satisfies that $\gamma'_{i}(v_i)\neq a$.

It remains to show that the color of $v_i$ is not modified between $\gamma'_{i}$ and the final coloring $\gamma'_s$. For the sake of contradiction assume that $j_*\in \{i+1,\dots, s\}$ is the smallest integer $j$ such that $\gamma'_j(v_i)\neq \gamma'_i(v_i)$. This implies that $v_i$ is the current vertex of a procedure $D$ generated by the procedure corresponding to $v_{j_*}$. Hence, the vertex $v_{j_*}$ appears before than $v_{i}$ in $P_D$. On the one hand, since $i\leq j_*$, by the order given on the neighbors of $u$, we have $L(v_i)\geq L(v_{j_*})$. On the other hand, since the path $P_D$ is level-decreasing, $ L(v_{j_*})> L(v_i)$, leading a contradiction. So Algorithm~\ref{alg:recoloring} is correct.\vspace{10pt}

Let us finally prove Lemma~\ref{lem:notmanyrecol}. If we call Algorithm~\ref{alg:recoloring} with the initial coloring $\alpha$ and the list $P=\{v\}$ then it provides a sequence of proper colorings such that the color of $v$ in the final coloring is distinct from the initial one and no other vertex with level at least $L(v)$ has been recolored.  This concludes the proof of Lemma~\ref{lem:notmanyrecol}.
\end{proof}

The next lemma is a natural consequence of Lemma~\ref{lem:notmanyrecol}.
\begin{lemma}\label{lem:recol}
Suppose that a graph $G$ on $n$ vertices admits a $t$-partition of degree $\ell$. Then, for any $(\ell+2)$-coloring $\alpha$ there exists a $(\ell+1)$-coloring $\beta$ (that is, $\beta(v)\neq \ell+2$ for every $v\in V(G)$) such that $d(\alpha,\beta)\leq \ell^t n^2$.

Moreover, there exists a stable set $S$ such that $G\setminus S$ admits a $t$-partition of degree $\ell-1$.
\end{lemma}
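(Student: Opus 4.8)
The idea is to use Lemma~\ref{lem:notmanyrecol} to eliminate color $\ell+2$ one vertex at a time. Fix the $t$-partition of degree $\ell$. The plan is to process the vertices currently colored $\ell+2$ and, for each such vertex $v$, invoke Lemma~\ref{lem:notmanyrecol} to change the color of $v$ to something other than $\ell+2$. By Lemma~\ref{lem:notmanyrecol}, this costs at most $\ell^{L(v)} \le \ell^t$ recolorings per vertex, and — crucially — during the whole operation the coloring agrees with the current coloring on $V(G_{L(v)}) \setminus \{v\}$, so no vertex of level at least $L(v)$ other than $v$ is recolored.

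To make this an actual induction we should process vertices in order of \emph{increasing} level: first recolor all vertices of level $1$ that carry color $\ell+2$, then those of level $2$, and so on. When we treat a vertex $v \in V_i$ with $\alpha(v) = \ell+2$, all recolorings triggered by Lemma~\ref{lem:notmanyrecol} affect only vertices of level $< i$, hence they cannot reintroduce color $\ell+2$ on a vertex of level $\ge i$ that we have already fixed, nor on $v$ itself after it has been recolored. Wait — we do need to check that the recursive calls do not recolor a level-$(<i)$ vertex \emph{to} color $\ell+2$; but Algorithm~\ref{alg:recoloring} only ever recolors the current vertex to its target color $a$, which is chosen to avoid the colors of its neighbors in $G_{L(\cdot)}$, and nothing forces $a$ to avoid $\ell+2$. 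So a cleaner way is: first fix the levels in increasing order so that after step $i$ no vertex of level $\le i$ has color $\ell+2$, and observe that any recoloring done while fixing a vertex of level $i$ only changes vertices of level $<i$; if such a change happens to assign $\ell+2$ to some already-fixed lower-level vertex, that vertex will be re-examined — but this could loop. The safe fix is to order within the algorithm's target-color choice, or simply to note that there are at most $n$ vertices to fix in total and bound the total work as $n \cdot n \cdot \ell^t$; since each invocation of Lemma~\ref{lem:notmanyrecol} on a vertex of level $\le t$ costs at most $\ell^t n$ recolorings (each vertex recolored at most $\ell^t$ times, $n$ vertices), and we invoke it at most $n$ times, we get $d(\alpha,\beta) \le \ell^t n^2$. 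I expect the bookkeeping that color $\ell+2$ is genuinely never reintroduced to be the main obstacle, and the right resolution is to have Algorithm~\ref{alg:recoloring}'s target color always avoid $\ell+2$ as well — there are $\ell+2$ colors and at most $\ell$ forbidden neighbor-colors, leaving at least $2$, so we can always also forbid $\ell+2$; then no recoloring ever uses color $\ell+2$, and a single increasing-level pass suffices, with at most $n$ vertices fixed, total cost $\le \ell^t n \cdot n = \ell^t n^2$.

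For the final sentence, take $S$ to be the set of vertices of $G$ with $L(v) = 1$ that remain after passing to $\beta$ — no, more carefully: we want a stable set $S$ whose removal drops the partition degree by one. The natural choice is $S = V_1$ if $V_1$ is stable; in general $V_1$ need not be stable. Instead I would argue: since $\beta$ is an $(\ell+1)$-coloring, pick a color class of $\beta$, say $S = \beta^{-1}(c)$ for the color $c$ minimizing... hmm, actually the cleanest route is to take any color class $S$ of the $(\ell+1)$-coloring $\beta$; then $S$ is stable, and in $G \setminus S$ define the same partition restricted to $V \setminus S$. A vertex $v \in V_i \setminus S$ had at most $\ell$ neighbors in $G_i$; if it had a neighbor of color $c$ in $G_i$, that neighbor is now removed, so its degree in the new $G_i$ drops to $\le \ell - 1$. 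The issue is a vertex $v \in V_i \setminus S$ all of whose $G_i$-neighbors avoid color $c$. To handle this, choose $S$ to be a color class that is "frequent enough" among the high-degree neighborhoods — but a uniform choice may not exist. The honest statement is probably that one chooses $S$ so that every vertex of degree exactly $\ell$ in its $G_i$ has a neighbor in $S$; since each such vertex has $\ell$ neighbors in $G_i$ using $\ell$ distinct colors (it uses all but one of $\ell+1$ colors on its closed neighborhood... no, it uses $\le \ell$), some color appears on a $G_i$-neighbor of every degree-$\ell$ vertex only if... This requires the sparsity hypothesis. I would therefore prove existence of $S$ by a counting/probabilistic argument: a random color class hits a fixed degree-$\ell$ vertex's $G_i$-neighborhood with probability $\ge \ell/(\ell+1)$, and then derandomize or iterate; alternatively, since we only need \emph{existence}, pick $S$ to be a maximal stable set among vertices that are "tight" (degree $\ell$ in their $G_i$) together with enough of their neighbors, and verify the degree condition directly. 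I expect \emph{this} part — producing the stable set $S$ with the degree-drop property — to be the real obstacle, and the key step is to leverage that every tight vertex has its $\ell$ neighbors spanning $\ell$ colors in an $(\ell+1)$-coloring, so removing one well-chosen color class decreases every tight vertex's relevant degree.
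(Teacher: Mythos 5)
Your reduction of the first claim to Lemma~\ref{lem:notmanyrecol} is the right idea, but the order in which you process the levels, and the patch you propose for it, do not work. Processing levels in increasing order forces you to guarantee that no recursive call ever assigns color $\ell+2$ to an already-fixed lower-level vertex, and your fix --- ``have the target color always avoid $\ell+2$ as well'' --- rests on a counting error: the target color of the current vertex $u$ must avoid $\gamma\left(\{u\}\cup N_{G_{L(u)}}(u)\right)$, i.e.\ up to $\ell$ neighbor colors \emph{and} the current color of $u$ itself (the recursive call exists precisely because $u$ must move off its current color). That is up to $\ell+1$ forbidden colors out of $\ell+2$, leaving at least one choice, not two, and that single remaining choice can be exactly $\ell+2$ (e.g.\ $u$ currently colored $1$ with $\ell$ neighbors in $G_{L(u)}$ colored $2,\dots,\ell+1$). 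Without the patch, a lower-level vertex can be pushed back to color $\ell+2$ after you have fixed it, so the ``at most $n$ vertices to fix'' bound collapses. The paper sidesteps all of this by processing levels in \emph{decreasing} order $V_t,V_{t-1},\dots,V_1$: by Lemma~\ref{lem:notmanyrecol}, fixing a vertex of $V_i$ recolors only vertices of level strictly below $i$, so color $\ell+2$ can only be reintroduced at levels not yet processed, and at level $1$ no recursive recoloring occurs at all. A single such pass gives $\beta$ within $\ell^t n^2$ steps.

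For the second claim you correctly rule out taking a color class of $\beta$ (a vertex of $G_i$-degree $\ell$ need not see a prescribed color in $N_{G_i}(v)$), but you stop at naming the obstacle without producing $S$; no argument based on the coloring is needed (and the $S$ used later in Theorem~\ref{thm:mad_recol} must in fact be independent of the coloring). The construction is purely combinatorial and top-down: let $S_t$ be a maximal stable set of $G_t$, and recursively let $S_i$ be a maximal stable set of $G_i$ with the previously chosen $S_j$ ($j>i$) and their $G_i$-neighbors removed; set $S=S_1\cup\dots\cup S_t$. Then $S$ is stable by construction, and maximality forces every $v\in V_i\setminus S_i$ to have a neighbor in $\bigcup_{j\geq i}S_j\subseteq V(G_i)$: either $v$ was excluded because it neighbors some higher $S_j$, or it could have been added to $S_i$. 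Since $v$ has at most $\ell$ neighbors in $G_i$, it has at most $\ell-1$ in $G_i\setminus S$, so $\{V_1\setminus S_1,\dots,V_t\setminus S_t\}$ is a $t$-partition of degree $\ell-1$ of $G\setminus S$.
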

\begin{proof}
Let us fix a $t$-partition of degree $\ell$ of $G$ and denote by $V_1,\ldots,V_t$ its parts.
By Lemma~\ref{lem:notmanyrecol}, we can change the color of every vertex in $v\in V_i$ with color $\ell+2$ by performing at most $\ell^i$ recolorings for each vertex in $\cup_{j < i} V_j$. Thus, first remove color $\ell+2$ from $V_t$ by recoloring each vertex in $G$ at most $\ell^t|V_t| $ times, then remove it from $V_{t-1}$ by recoloring each vertex at most $\ell^{t-1}|V_{t-1}| $, and so on. By Claim~\ref{lem:notmanyrecol}, while removing color $\ell+2$ from $V_i$, we do not recolor any of the vertices in $G_i$ (apart from the ones with color $\ell+2$). Therefore, while recoloring $V_i$ we never create new vertices in color $\ell+2$ in $G_i$. After removing color $\ell+2$ from $V_1$ we have a proper coloring $\beta$ of $G$ that does not use the color $\ell+2$. Moreover, we have recolored each vertex at most
$$
\ell  |V_1| + \ell^2 |V_2| + \dots + \ell^t |V_t|\leq \ell^t n\;,
$$
times. Thus the total number of recolorings is at most $\ell^t n^2 $ concluding the first part of the lemma.

It only remains to show that we can select a stable set such that the remaining graph has a $t$-partition of degree $\ell-1$. Let $S_t$ be a maximal (by inclusion) stable set in $G_t$. Define recursively $S_i$ to be a maximal (by inclusion) stable set in $G_i\setminus T_i$, where $T_i=\bigcup_{j\geq i} \left(S_j \cup N_{G_{i}}(S_j)\right)$~(recall that $N(X)$ is the set of vertices in $V\setminus X$ at distance one from some vertex in $X$) and let $S=S_1\cup \dots\cup S_t$. By construction of $T_i$, any vertex in $S_i$ is not in the neighborhood of $S_j$ for any $j>i$, thus $S$ is a stable set.

We claim that $\{V_1\setminus S_1,\dots, V_t\setminus S_t\}$ is a $t$-partition of degree $\ell-1$ of $G\setminus S$. We just need to show that every $v\in V_i\setminus S_i$ has degree at most $\ell-1$ in $G_i'= G_i\setminus S$. By the maximality condition of the selected stable sets, any such $v$ has at least one neighbor in $S$. In particular, by the order of the construction (from $V_t$ to $V_1$), it has at least one neighbor in $\cup_{j\geq i} S_j$ (otherwise $v$ could be included in $S_i$, contradicting the maximality of it).
Since $\{V_1,\dots, V_t\}$ is a $t$-partition of degree $\ell$, any $v\in V_i$ has at most $\ell$ neighbors in $G_i$. Therefore the degree of $v$ in $G_i'$ is at most $\ell-1$ and $G\setminus S$ admits a $t$-partition of degree $\ell-1$.
\end{proof}

Now we are ready to prove the main theorem of this section,
\begin{proof}[Proof of Theorem~\ref{thm:mad_recol}]
We will show that there exists a constant $c=c(d,\eps)$ such that any $k$-coloring $\alpha$ can be reduced to a canonical $k$-coloring $\gamma^*$ using $O(n^c)$ recoloring steps. This canonical coloring $\gamma^*$ only depends on structural properties of $G$ and not on the coloring $\alpha$ (the precise definition of $\gamma^*$ will be detailed below). The previous claim implies the statement of the theorem: between any pair of colorings $\alpha$ and $\alpha'$ there exists a path in the $k$-recoloring graph of length $O(n^c)$ (which in particular goes through $\gamma^*$).

Let us first use the fact that $\mad(G)\leq d-\eps$ to show that $G$ admits a $t$-partition of degree $d-1$ for some $t= O(\log{n})$. By the definition of the maximum average degree, every nonempty subgraph of $G$  has density at most $d-\eps$. Partition the set $V=U_{< d}\cup U_{\geq d}$ in two parts where $v\in U_{< d}$ if the degree of $v$ at most $d-1$ and $v\in U_{\geq d}$ otherwise.
We have,
$$
(d-\eps) \cdot n\geq  2|E(G)|= \sum_{v\in V} \deg(v) \geq \sum_{v\in U_{\geq d}} \deg(v)\geq d |U_{\geq d}|\;.
$$
This directly implies that $|U_{\geq d}|\leq \frac{d-\eps}{d}\cdot n$. Set the first part of the $t$-partition as $V_1=U_{< d}$. Notice that $|V_1| \geq \frac \epsilon d \cdot n$. Since the graph $G_2=V\setminus V_1$ is a subgraph of $G$, its maximum average degree is at most $d-\eps$ and thus we can repeat the same procedure on it. Moreover, $|V(G_2)|\leq \frac{d-\eps}{d}\cdot n$. After $m$ iterations of this procedure, we have $|V(G_m)|\leq \Big(\frac{d-\epsilon}{d}\Big)^m n$, and thus, we have to repeat this procedure at most $t=\log_{\frac{d}{d-\eps}}{n} = \frac{\log_{d}{n}}{\log_d (d/(d-\eps))}$ times before we finish the construction of the partition of degree $d-1$. Set
\[ c(d,\eps)= \frac{1}{\log_d (d/(d-\eps))}+2\;.\]
Recall that $G$ admits a $\left((c(d)-3)\cdot \log_d{n}\right)$-partition of degree at most $d-1$.

Let us show how to transform $\alpha$ into the canonical coloring $\gamma^*$. Let $G_{d-1}=G$ and $\alpha_{d-1}=\alpha$. For every $\ell$ from $d-1$ to $1$, we do the following recoloring procedure:
\begin{enumerate}
\item Use Lemma~\ref{lem:recol} to $G_\ell$ in order to transform the $(k-(d-1)+\ell)$-coloring $\alpha_{\ell}$ into a $(k-d+\ell)$-coloring $\beta$ using at most $\ell^t |V(G_\ell)|^2$ many recoloring steps.
\item Let $S_\ell$ be the stable set of $G_\ell$ provided by Lemma~\ref{lem:recol}. Observe that $S_\ell$ does not depend on the coloring $\alpha_\ell$. Construct the $(k-(d-1)+\ell)$-coloring $\beta'$ from $\beta$ by recoloring the vertices in $S_\ell$ with color~$(k-(d-1)+\ell)$.
\item Consider the graph $G_{\ell-1}=G_{\ell}\setminus S$ and let $\alpha_{\ell-1}$ be the $(k-d+\ell)$-coloring obtained by restricting $\beta'$ into $G_{\ell-1}$. Notice that, by Lemma~\ref{lem:recol}, $G_{\ell-1}$ admits a $\left((c(d)-2)\cdot\log_{d}{n}\right)$-partition of degree $\ell-1$.
\end{enumerate}

By Lemma~\ref{lem:recol}, at Step $1$ of every iteration we perform at most $\ell^t \cdot |V(G_\ell)|^2\leq d^t n^2\leq n^{c}$ many recolorings. At Step $2$ of each iteration we perform at most $|S_\ell|\leq n$ many recolorings. Recall that the number of iterations is $d-1$. Thus, the number of recolorings during the recoloring procedure is at most $d ( n^{c}+n)$.

Let $\alpha_0$ be the $k$-coloring obtained at the end of the procedure. Since the set $S_\ell$ obtained at Step $2$ only depends on the graph $G_\ell$ and the selected $t$-partition of degree $(d-1)$ of the graph $G$ but not on the coloring $\alpha_\ell$, the coloring $\alpha_0$ restricted to $G\setminus G_0$, does not depend on $\alpha$. Indeed, all the vertices of $S_\ell$ are colored with color $(k-(d-1)+\ell)$ for every $\ell$ between $1$ and $d-1$. Moreover $G_0=G\setminus (S_1\cup\dots\cup S_{d-1})$, has a $t$-partition $\{V_1, \dots, V_t\}$ of degree $0$, or, in other words, $G_0$ is the empty subgraph. Hence, $\alpha_0$ can be transformed into $\gamma^*$ by recoloring all the vertices in $G_0$ with color $1$ (in fact, only $d$ colors are used in $\gamma^*$). This can be done in at most $n$ recoloring steps.

Thus, we can transform any $k$-coloring $\alpha$ into a canonical $k$-coloring $\gamma^*$ (\emph{i.e.} a coloring that does not depend on $\alpha$) using at most $d(n^{c}+n)+n =O(n^c)$ many recolorings. This implies that for any two $k$-colorings $\alpha$ and $\alpha'$, we have $d(\alpha,\alpha')= O(n^c)$. Indeed, $\alpha$ can be transformed into $\gamma^*$ with at most $O(n^c)$ recolorings and $\alpha'$ can be transformed into $\gamma^*$ with at most $O(n^c)$ recolorings. Therefore,
$$
\diam(\C_{k}(G))= O(n^c)\;,
$$
concluding the proof of the theorem.
\end{proof}

We did not make any attempt to improve the constant $c$ obtained in Theorem~\ref{thm:mad_recol}. However, this constant can be decreased if we are more careful. For instance, the $n^2$ factor obtained in Lemma~\ref{lem:recol} can be replaced by $n$, since Claim~\ref{cla:numberpaths} actually bounds the number of decreasing paths between $w$ and vertices at the same level as $u$ (if we assume that $L(w)<L(u)$).

Note that the proof also provides an algorithm which runs in polynomial time. Indeed Algorithm~\ref{alg:recoloring} runs in polynomial time. Moreover the partition of Theorem~\ref{thm:mad_recol} can be found in polynomial time as well as the stable set provided by Lemma~\ref{lem:recol}. So the proof gives an algorithm which transforms any $k$-coloring into any other one in polynomial time, provided that $\mad(G)\leq d-\eps$ for some $\eps>0$, and that $k\geq d+1$.

\section{Recoloring planar graphs and related classes}

As observed in~\cite{BonamyB14}, there is a planar graph $G$ (the graph of the icosahedron, see Figure~\ref{fig:6}) such that $\C_6(G)$ is not even connected ($\diam(\C_{6}(G))=\infty$)\;. There also exists a planar graph $G$ such that  $\C_5(G)$ is not connected ($\diam(\C_{5}(G))=\infty$)\; (for instance consider the graph of Figure~\ref{fig:6} where vertices colored with $6$ were deleted). In both cases the reason is the same: the colorings are frozen and then no vertex can be recolored, or, otherwise stated, the coloring is an isolated vertex in the recoloring graph.

\begin{figure}[ht!]
 \begin{center}
 \includegraphics[width=0.4\textwidth]{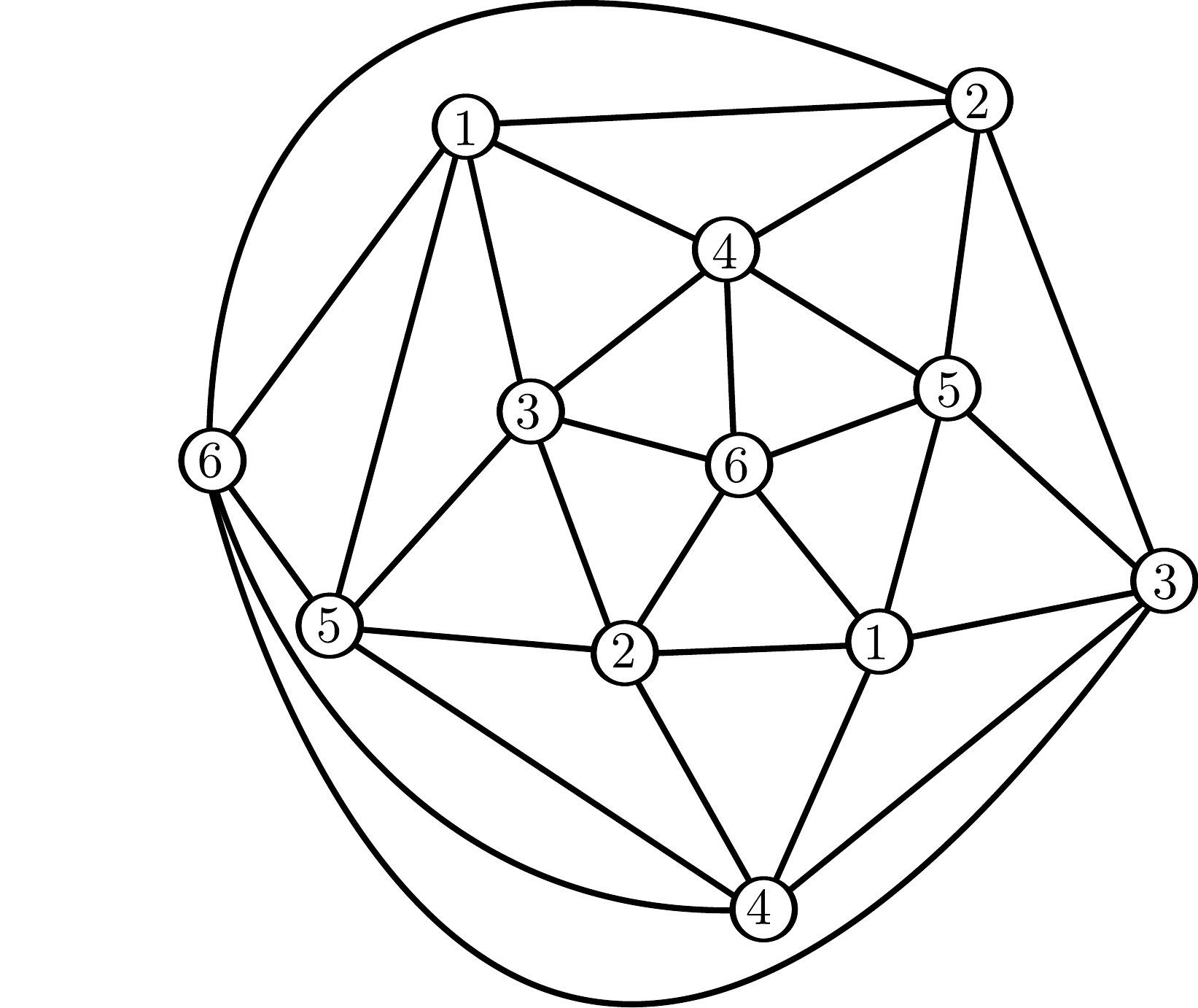}
 \end{center}
 \caption{A $6$-coloring corresponding to an isolated vertex in $\C_6(G)$.}
 \label{fig:6}
\end{figure}

Recall that any planar graph $G$ is $5$-degenerate. The result of Cereceda~\cite{Cereceda} on the degeneracy of implies that for any planar graph $G$, $\diam(\C_{11}(G))=O(n^2)$\;. The result of Dyer et al~\cite{dyer2006randomly} show that $\C_{k}(G)$ is connected for every $k\geq 7$\;. The best known upper bound for the diameter in the cases $k=7,8,9,10$ is the trivial one due to Dyer et al.~\cite{dyer2006randomly}, \emph{i.e.} $\diam(\C_{k}(G))\leq k^n$\;.

As a corollary of Theorem~\ref{thm:mad_recol}, we obtain that $\C_{8}(G)$ has polynomial diameter.
\begin{corollary}
For any planar graph $G$ on $n$ vertices and any $k\geq 8$,
$$
\diam(\C_{k}(G))=\Poly(n)\;.
$$
\end{corollary}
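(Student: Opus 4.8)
The plan is to derive the corollary directly from Theorem~\ref{thm:mad_recol} by supplying the one missing ingredient: a bound on the maximum average degree of a planar graph. First I would recall the standard consequence of Euler's formula that every planar graph on $m\geq 3$ vertices has at most $3m-6$ edges, and hence every nonempty subgraph $H$ of a planar graph satisfies $2|E(H)|/|V(H)|\leq (6|V(H)|-12)/|V(H)| < 6$. Taking the maximum over all nonempty induced subgraphs $H\subseteq G$ gives $\mad(G) < 6$. (For the trivial cases $|V(H)|\in\{1,2\}$ the ratio is at most $2 < 6$ as well, so the bound holds uniformly.)

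Next I would invoke Theorem~\ref{thm:mad_recol} with $d=6$ and, say, $\eps = 6-\mad(G) > 0$; more carefully, since $\mad(G)<6$ and there are only finitely many values $2|E(H)|/|V(H)|$ for subgraphs of a given graph, we may fix any $\eps>0$ with $\mad(G)\leq 6-\eps$. The theorem then yields a constant $c = c(6,\eps)\geq 1$ such that for every $k\geq 6+1 = 7$ we have $\diam(\C_k(G)) = O(n^c)$, which is in particular $\Poly(n)$. Since the corollary only asks for $k\geq 8$, this is more than enough; I would simply state the conclusion for all $k\geq 7$ and note that $k\geq 8$ follows a fortiori.

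There is essentially no obstacle here — the corollary is a routine specialization of the main theorem once the planarity-to-$\mad$ translation is made. The only mild subtlety worth spelling out is that Theorem~\ref{thm:mad_recol} is phrased for a \emph{fixed} $\eps$, so one should be slightly careful that the constant $c$ in $O(n^c)$ may depend on how close $\mad(G)$ is to $6$; but since $\mad(G)\leq 6-\eps$ for the absolute constant $\eps$ one extracts (or indeed one can take $\eps=1$, say, only for those planar $G$ with $\mad(G)\leq 5$, and handle the general case by noting $\mad(G)$ is always a rational bounded away from $6$ by at least $1/n$ — here it is cleaner to just use that for the class of \emph{all} planar graphs one can take $d=7$, $k\geq 8$, $\eps$ can be taken as any fixed positive constant since $\mad(G)<6\leq 7-1$), the bound $\Poly(n)$ holds with a uniform exponent. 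Concretely, applying Theorem~\ref{thm:mad_recol} with $d=7$ and $\eps=1$ gives $\mad(G)<6 = 7-1$ for every planar $G$, hence $\diam(\C_k(G))=O(n^{c(7,1)})=\Poly(n)$ for every $k\geq 8$, which is exactly the statement; this avoids any dependence of the exponent on the individual graph.
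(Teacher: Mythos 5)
Your final argument---bounding $\mad(G)<6$ via Euler's formula and applying Theorem~\ref{thm:mad_recol} with $d=7$ and $\eps=1$ so that the exponent $c(7,1)$ is uniform over all planar graphs---is exactly the paper's proof. Your initial detour through $d=6$ with a graph-dependent $\eps$ was rightly discarded, since it would not give a uniform exponent; the version you settle on is correct and complete.
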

\begin{proof}
Euler formula ensures that for every planar graph $H$, $ |E(H)| \leq 3 |V(H)| - 6$.
Since every subgraph of a planar graph is also planar, we have $\mad(G)<6$. So we just have to apply Theorem~\ref{thm:mad_recol} with $d=7$ and $\eps=1$ to conclude.
\end{proof}

It would be interesting to determine whether $\diam(\C_{7}(G))=\Poly(n)$ or not. Observe that while Theorem~\ref{thm:mad_recol} is be able to prove such statement for a graph $G$ with $\mad(G)=5.99$, it is not enough to prove it for a planar graphs because their maximum average degree is not bounded away from $6$. Unfortunately, the same partition argument we used for the proof of Theorem~\ref{thm:mad_recol} will not be able to show that the diameter is small in the case we use $7$ colors. Here we briefly sketch the argument
\begin{proposition}\label{prop:degree6}
There exists a planar graph $G$ on $n$ vertices that does not admit any $\frac{\sqrt{n}}{2}$-partition of degree~$5$.
\end{proposition}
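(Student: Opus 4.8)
The plan is to exhibit an explicit triangulated planar graph in which repeatedly deleting the vertices of degree at most $5$ barely shrinks the graph, so that every partition of degree $5$ is forced to have $\Omega(n)$ parts, in particular more than $\frac{\sqrt n}{2}$.

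\emph{The construction.} For an integer $m\ge 2$, let $G=G_m$ be the triangulated annulus on $3m$ vertices: take vertices $v_{i,1},v_{i,2},v_{i,3}$ for $1\le i\le m$, let each triple $T_i=\{v_{i,1},v_{i,2},v_{i,3}\}$ span a triangle, and between consecutive triangles add the six edges $v_{i,j}v_{i+1,j}$ and $v_{i,j}v_{i+1,j+1}$ (indices modulo $3$), which triangulate the band between $T_i$ and $T_{i+1}$. This graph is simple and planar, as it embeds in an annulus. A direct degree count shows that every vertex of $T_1$ and of $T_m$ has degree $4$, while every other vertex has degree exactly $6$; hence the vertices of degree at most $5$ in $G$ are precisely those of $T_1\cup T_m$.

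\emph{Key claim.} Let $\{V_1,\dots,V_t\}$ be any $t$-partition of $G$ of degree $5$, and recall $G_i=G[\bigcup_{j\ge i}V_j]$. I claim that for every $i$,
$$V_1\cup\dots\cup V_i\ \subseteq\ T_1\cup\dots\cup T_i\cup T_{m-i+1}\cup\dots\cup T_m.$$
The proof is by induction on $i$. For $i=1$ this holds because each vertex of $V_1$ has degree at most $5$ in $G_1=G$, and such vertices lie only in $T_1\cup T_m$. For the inductive step, fix $j$ with $i+1\le j\le m-i$ and a vertex $v\in T_j$. The closed neighbourhood of $v$ is contained in $T_{j-1}\cup T_j\cup T_{j+1}$, and since $j-1\ge i$ and $j+1\le m-i+1$, the induction hypothesis says this set is disjoint from $V_1\cup\dots\cup V_{i-1}$. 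Thus $v$ retains all $6$ of its neighbours in $G_i$, so $v$ cannot belong to $V_i$ (whose vertices have degree at most $5$ in $G_i$). Hence $V_i$ avoids $T_{i+1}\cup\dots\cup T_{m-i}$, which together with the induction hypothesis gives the claim for $i$.

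\emph{Conclusion and the hard part.} Applying the claim with $i=\lceil m/2\rceil-1$ (so that $i+1\le m-i$), the nonempty triangle $T_{i+1}$ is not contained in $T_1\cup\dots\cup T_i\cup T_{m-i+1}\cup\dots\cup T_m$, hence not in $V_1\cup\dots\cup V_i$; since the parts cover all of $V(G)$, this forces $t\ge i+1=\lceil m/2\rceil$. With $n=3m$ we get $t\ge\lceil n/6\rceil>\frac{\sqrt n}{2}$ whenever $n$ is large enough (say $n=3m$ with $m\ge 4$), so such a $G_m$ has no $\frac{\sqrt n}{2}$-partition of degree $5$; for the remaining values of $n$ one may attach one or two pendant vertices, which disappear in the first peeling step and leave the argument intact. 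The only delicate point is the index bookkeeping in the key claim: one must verify that deleting the ``outer'' parts $V_1,\dots,V_{i-1}$ cannot decrease the degree of any vertex lying in a ``middle'' triangle $T_{i+1},\dots,T_{m-i}$, which relies on the fact that a vertex of $T_j$ only has neighbours in $T_{j-1}\cup T_j\cup T_{j+1}$. (Equivalently, one can observe abstractly that the least number of parts of a degree-$\ell$ partition of a graph $H$ equals the number of rounds needed to empty $H$ by iteratively deleting all vertices of degree at most $\ell$, and that for $G_m$ this takes exactly $\lceil m/2\rceil$ rounds.)
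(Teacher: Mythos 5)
Your proof is correct. It rests on the same underlying idea as the paper's --- build a planar graph in which almost every vertex has degree $6$, so that any degree-$5$ partition can only ``peel'' the graph from its boundary, one layer per part --- but your construction is genuinely different and in fact stronger. The paper uses a triangulated $2m\times 2m$ grid (inner vertices of degree $6$) and shows by induction that the central vertex cannot lie in $V_i$ for $i<m$, which forces at least $m=\sqrt{n}/2$ parts; peeling there removes a whole two-dimensional boundary layer each round, so only a $\Theta(\sqrt{n})$ lower bound on the number of parts is obtained. Your triangulated annulus is ``one-dimensional'': each round can only remove the two extremal triangles, so you get $t\geq\lceil m/2\rceil=\Theta(n)$ parts, comfortably exceeding $\sqrt{n}/2$. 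Your key claim and its induction are sound (the closed neighbourhood of a vertex of $T_j$ lies in $T_{j-1}\cup T_j\cup T_{j+1}$, so middle vertices keep degree $6$ in $G_i$), the degree count and planarity of the stacked-antiprism graph check out, and the pendant-vertex patch for $n\not\equiv 0\pmod 3$ is harmless since pendants vanish in the first part. The abstract remark in your final parenthesis --- that the minimum $t$ equals the number of rounds of greedily deleting all vertices of degree at most $\ell$ --- is also correct and is really the cleanest way to phrase both your argument and the paper's. The only cosmetic caveat is that $\sqrt{n}/2$ need not be an integer for your values of $n$; the intended reading (no $t$-partition for any $t\leq\sqrt{n}/2$) is exactly what your bound delivers, and the paper's own choice $n=4m^2$ has the mirror-image blemish of only ruling out $t\leq m-1$.
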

\begin{proof}
\begin{figure}
 \centering
 \includegraphics[scale=0.6]{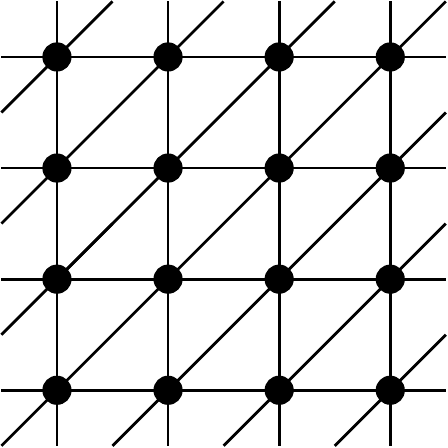}
 \caption{The structure of a planar graph with no $\frac{\sqrt{n}}{2}$-partition of degree~$5$.}
 \label{fig:supergrid}
\end{figure}

Suppose that $n=4m^2$ and let $G$ be the graph with vertex set $V(G)=\{(i,j):1\leq i,j \leq 2m\}$ and edge set $E(G)=\{(i_1,j_1)(i_2,j_2): |i_1-i_2|+|j_1-j_2|=1\}\cup \{(i_1,j_1)(i_2,j_2): i_1=i_2+1,\;j_1=j_2+1\}$. This can be seen as a triangulated grid with \emph{inner vertices} (\emph{i.e.} vertices with both coordinates in $\{2,\dots,2m-1\}$) of degree $6$~(see Figure~\ref{fig:supergrid}).

We claim that $v=(m,m)\notin V_i$, for any partition of degree $5$ and $i<m$. We show it by induction in $m$. For $m=1$ there is nothing to prove. Since any inner vertex has degree $6$, for any such partition, $V_1$ does not contain inner vertices. We can assume that $V_1$ is composed by all the vertices of degree at most $5$ in $G$, that is the ones lying on the boundary of the grid. Now, $G_2=G\setminus V_1$ is a $2(m-1)\times 2(m-1)$ triangulated grid. Thus, by induction hypothesis, the vertex $m$ is not in $V_i$'s of $G_2$ for all the $i<m-1$. This proves the claim.
\end{proof}
Closing the gap between $7$ and $8$ on planar graphs is an interesting open problem which may give new methods for tackling Cereceda et al.'s degeneracy conjecture.
Moreover note that since the graph presented in Proposition~\ref{prop:degree6} is $3$-colorable, the method introduced for Theorem~\ref{thm:mad_recol} is not useful to prove that the diameter of $\C_7(G)$ is polynomial even if $G$ is a $3$-colorable planar graphs.

Though, an interesting result can be obtained for triangle-free planar graphs (recall that triangle-free planar graphs are $3$-colorable by Gr\"otzsch's theorem).
\begin{corollary}\label{prop:degree4}
For any triangle-free planar graph $G$ on $n$ vertices and any $k \geq 6$ we have
$$
\diam(\C_{k}(G))=\Poly(n)\;.
$$
Besides, there exists a triangle-free planar graph $G$ on $n$ vertices that does not admit any $\frac{\sqrt{n}}{2}$-partition of degree~$4$.
\end{corollary}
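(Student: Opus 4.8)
For the first assertion I would reduce directly to Theorem~\ref{thm:mad_recol} through a girth bound. If $G$ is triangle-free and planar, then every subgraph $H$ with $|V(H)|\ge 3$ has girth at least $4$, so Euler's formula gives $|E(H)|\le 2|V(H)|-4$; hence $\tfrac{2|E(H)|}{|V(H)|}<4$ for every nonempty $H$, i.e.\ $\mad(G)<4$. I would then apply Theorem~\ref{thm:mad_recol} with $d=5$ and $\eps=1$: the hypothesis $\mad(G)\le d-\eps=4$ holds, so for every $k\ge d+1=6$ we obtain $\diam(\C_k(G))=O(n^{c})=\Poly(n)$ with $c=c(5,1)$ a fixed constant. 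This is the exact analogue of the preceding corollary, with the bound $\mad<6$ replaced by $\mad<4$.

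For the extremal statement I would run the peeling induction of Proposition~\ref{prop:degree6} on a triangle-free planar host. Take $n=4m^2$ and let $G$ be the $2m\times 2m$ square grid, with $V(G)=\{(i,j):1\le i,j\le 2m\}$ and an edge whenever $|i_1-i_2|+|j_1-j_2|=1$. This graph is bipartite, hence triangle-free, and planar; its \emph{inner} vertices (both coordinates in $\{2,\dots,2m-1\}$) have degree $4$, while every boundary vertex has degree at most $3$. The claim I would prove is that in any partition of degree $3$ the central vertex $(m,m)$ lies in a part of index at least $m=\tfrac{\sqrt n}{2}$, so $G$ admits no $\tfrac{\sqrt n}{2}$-partition of degree~$3$.

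The argument is an induction on $m$, and the single point needing care is that the conclusion must hold for \emph{every} degree-$3$ partition, not merely the greedy one. The case $m=1$ is vacuous. For the step, fix a degree-$3$ partition $\{V_1,\dots,V_t\}$. Since each inner vertex has degree $4>3$ in $G_1=G$, no inner vertex can lie in $V_1$, so $V_1$ is contained in the boundary. Let $H=G[\{2,\dots,2m-1\}^2]$ be the $2(m-1)\times 2(m-1)$ inner subgrid, whose centre is again $(m,m)$. As $V_1$ avoids $H$, the traces $V_2\cap H,\dots,V_t\cap H$ partition $V(H)$; moreover, since passing to an induced subgraph never increases degrees, this trace is a degree-$3$ partition of $H$. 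By the induction hypothesis the centre of $H$ sits in a part of index at least $m-1$ in $H$'s partition, i.e.\ $(m,m)\in V_i$ with $i-1\ge m-1$, so $i\ge m$. This proves the claim and the extremal assertion.

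The genuine obstacle — and the reason the extremal part is sharp at degree~$3$ and not at degree~$4$ — is that degree~$4$ simply cannot be forced on a triangle-free planar graph. Indeed $\mad(G)<4<5$, so in every induced subgraph at least a $\tfrac15$-fraction of the vertices have degree at most $4$; peeling all of them at once produces a degree-$4$ partition with only $O(\log n)$ parts, far fewer than $\tfrac{\sqrt n}{2}$. Hence the correct triangle-free analogue of Proposition~\ref{prop:degree6} lives one degree lower: the barrier to pushing the method below $k=6$ is precisely that degree-$3$ partitions may need $\Theta(\sqrt n)$ levels, because on grids $\mad$ approaches $4$ without being bounded away from it. This matches Proposition~\ref{prop:degree6}, where degree~$5$ is the threshold exactly because there the relevant bound is $\mad<6$; the grid above realizes the corresponding $\Theta(\sqrt n)$ lower bound in the triangle-free setting.
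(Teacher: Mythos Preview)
Your proof is correct and follows the same route as the paper: Euler's formula gives $\mad(G)<4$ for triangle-free planar graphs, whence Theorem~\ref{thm:mad_recol} with $d=5$, $\eps=1$ yields the first part; for the second part both you and the paper use the $2m\times 2m$ square grid and the peeling induction of Proposition~\ref{prop:degree6}, with you spelling out the induction that the paper merely cites.

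One point worth flagging: the corollary as stated says ``degree~$4$'', but both the paper's proof and yours establish the claim for degree~$3$ (the paper's proof reads ``for any partition of degree~$3$'' and concludes that ``the argument cannot be extended to $5$-colorings''). You go further and correctly observe that the degree-$4$ version is actually \emph{false}: since $\mad(G)<4\le 5-1$, the construction in the proof of Theorem~\ref{thm:mad_recol} produces a degree-$4$ partition with $O(\log n)$ levels for every triangle-free planar graph. So the ``$4$'' in the statement is a typo for ``$3$'', and your added paragraph gives exactly the right reason why the obstruction sits one degree lower than in Proposition~\ref{prop:degree6}.
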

\begin{proof}
Again, a slight variant of the Euler formula ensures that for every triangle-free planar graph $H$, $ |E(H)| \leq 2|V(H)| - 4$. Since every subgraph of a triangle-free planar graph is also triangle-free and planar, we have $\mad(G)<4$. So we just have to apply Theorem~\ref{thm:mad_recol} with $d=5$ and $\eps=1$ to conclude.


For the second part of the statement, suppose that $n=4m^2$ and let $G$ be the graph with vertex set $V(G)=\{(i,j):1\leq i,j \leq 2m\}$ and edge set $E(G)=\{(i_1,j_1)(i_2,j_2): |i_1-i_2|+|j_1-j_2|=1\}$. This can be seen as a grid. We claim that $v=(m,m)\notin V_i$, for any partition of degree $3$ and $i<m$, which can be proved as in Proposition~\ref{prop:degree6}. So the argument cannot be extended to $5$-colorings of triangle-free planar graphs.
\end{proof}

\paragraph{Acknowledgement}
The authors want to thank Marthe Bonamy for fruitful discussions and for pointed out a weaker version of Theorem~\ref{thm:lineardiam}.

\bibliography{recoloring}

\bibliographystyle{plain}

\end{document}